\newcommand{\noun}[1]{\textsc{#1}}
\newcommand{\ZZ}{\mathbb{Z}}
\numberwithin{equation}{section}
\numberwithin{figure}{section}
\theoremstyle{plain}
\newtheorem{thm}{\protect\theoremname}[section]
 \theoremstyle{plain}
 \newtheorem{lem}[thm]{\protect\lemmaname}
 \newlist{casenv}{enumerate}{4}
 \setlist[casenv]{leftmargin=*,align=left,widest={iiii}}
 \setlist[casenv,1]{label={{\itshape\ \casename} \arabic*.},ref=\arabic*}
 \setlist[casenv,2]{label={{\itshape\ \casename} \roman*.},ref=\roman*}
 \setlist[casenv,3]{label={{\itshape\ \casename\ \alph*.}},ref=\alph*}
 \setlist[casenv,4]{label={{\itshape\ \casename} \arabic*.},ref=\arabic*}
 \providecommand{\lemmaname}{Lemma}
 \providecommand{\casename}{Case}
\providecommand{\theoremname}{Theorem}
\begin{document}

\title{Comply/Constrain Subtraction}

\author{Archishman Sravankumar}
\address{Euler Circle, Palo Alto, CA, 94306}
\email{archishman.sravankumar@gmail.com}

\date{\today}

\begin{abstract}
A comply/constrain game or a game with a Muller twist is a game where the next player is allowed to place constraints on
opponent's next move. We develop a closed form formula for
the Grundy value of the single-pile \noun{subtraction} game where the next
player may determine whether the previous player has to select a move
from the set of some first $k$ natural numbers or its complement. We 
also investigate the periodicity of Grundy values when the set of legal
moves is from a set of finite arithmetic sequences.
\end{abstract}

\maketitle

\section{Introduction}


In this paper, we consider a variant of the classic game of \textsc{subtraction}. In an instance of \textsc{subtraction}, we choose some subset $S\subseteq\ZZ^+$ (where $\ZZ^+$ for us does not include 0) for once and for all. A move in the game \textsc{subtraction}$_S(n)$ consists of removing some $s\in S$ stones from a pile of size $n$, leaving $n-s\ge 0$ stones. The loser is the first player who cannot make a move.
If $s\in S$ is such that $s>n$, then we ignore $s$ because we cannot take more than $n$ stones from a pile of $n$ stones. Therefore, the set of legal moves can be simplified to $S\cap[n]$ where $[n]=\{1,2,3,4,\ldots, n\}$. We can represent a \noun{subtraction} position by its options. The position $\textsc{subtraction}_S(n)$ can be defined in terms of its options as
\begin{equation}
\textsc{subtraction}_S(n)=\{(n-s,S):s\in S\cap[n]\}.
\end{equation}
The values of impartial games like \noun{subtraction} can be calculated
using the minimal excludant (mex) rule.

In this paper, we consider a modification of the \noun{subtraction} game, known as a \emph{Muller twist}, which we call \noun{comply/constrain subtraction}. A comply/constrain game (or a game with a Muller twist) is one where
the next player (the player whose turn it is to move) may add a condition
about the moves the opponent may make along with the physical
move itself. In the game of \noun{subtraction}, we introduce a Muller twist by allowing the next player to decide whether the opponent should make a move from $S$ or its complement $\overline{S}=\mathbb{Z}^+\setminus S$. We denote a game of \noun{comply/constrain subtraction} with $n$ stones and subtraction set $S$ as $(n,S)$.

Comply/constrain games were first studied in a paper by St\u{a}nic\u{a} and Smith \cite{SS02}. A comply/constrain game, or a game with a Muller twist, is a variant of a traditional game where a player's physical move of game pieces is followed by a constraint chosen from a well-defined set of constraints. Their paper considered \noun{odd-or-even nim}, \noun{tall-or-short Wyt queens}, and \noun{Fibonacci-or-not nim}, all of which are traditional combinatorial games with a Muller twist. Other variants of nim with a Muller twist include \noun{blocking nim} which was studied in \cite{FHR03}. A paper by Horrocks and Trenton \cite{key-2} considers the game of \noun{subtraction} with a Muller twist. They analyze the periodicity of Grundy values when the subtraction set is of the form $S = \{a : a \equiv b \mod c\}$ for some $b$ and $c$. 

Another related family of games that has recently attracted interest is the family of \emph{push-the-button} games; see~\cite{DHLP18}. In push-the-button games, there are two rulesets, say $A$ and $B$, played on the same heap set. The game initially starts with players moving according to the $A$-ruleset, but at any point, a player may ``push the button,'' and all subsequent moves are played according to the $B$-ruleset. Games with Muller twists are similar, except that the button can be pressed many times, thus switching back and forth between the rulesets.

This paper also fits in with recent interest in finding Grundy values of heap games. See for instance~\cite{LR16} for recent work on Grundy values of the game of \textsc{Fibonacci nim}, first studied in~\cite{Whinihan63}.

The options of the \noun{comply/constrain subtraction} $(n,S)$ are 
\begin{equation}
(n,S)=\{(n-s,S):s\in S\cap[n]\}\cup\{(n-s,\overline{S}):s\in S\cap[n]\}.
\end{equation}
We will use $\mathcal{G}(n,S)$
for the Grundy value of the game $(n,S)$. That is, if $(n,S)=*x$,
then $\mathcal{G}(n,S)=x$.

In \S \ref{sec:consec}, we develop a closed form expression for the Grundy values of
games where $S=[k]$ or $S=\overline{[k]}$. 
In \S \ref{sec:arith} we analyze the periodicity of Grundy values when $S=\{b+ic:\ 0\leq i\leq i_{\max},\frac{c+2}{2}\leq b<c\}$. These are similar to the games studied in \cite{key-2} except that we now cut off the arithmetic progression at some point. 

\section*{Acknowledgments}

I would like to thank Simon Rubinstein-Salzedo for helpful discussions.

\section{Consecutive integers} \label{sec:consec}
Here we investigate the Grundy values of games $(n,S)$ where $S = [k]:=\{1,2,\ldots,k\}$. While these sets are also arithmetic progressions, they fail other parts of the hypothesis required for the analysis in \S \ref{sec:arith}. Therefore, we investigate them separately.

\begin{lem}\label{thm:1}
For $n>k$, we have $\mathcal{G}(n+1,\overline{[k]})\geq \mathcal{G}(n,\overline{[k]})$.
\end{lem}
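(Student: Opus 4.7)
The plan is to prove this directly from the recursive definition of the Grundy value, by showing that the option set of $(n, \overline{[k]})$ is contained in the option set of $(n+1, \overline{[k]})$, so that the mex of Grundy values of options can only weakly increase when passing from $n$ to $n+1$.

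First I would unpack the option formula. From $(n, \overline{[k]})$ with $n > k$, the legal subtractions are $s \in \overline{[k]} \cap [n] = \{k+1, k+2, \ldots, n\}$, so by definition the options are the positions $(n-s, \overline{[k]})$ and $(n-s, [k])$ for these values of $s$. Equivalently, the pile size of an option ranges over $\{0, 1, \ldots, n-k-1\}$ while the constraint ranges independently over $\{\overline{[k]}, [k]\}$. The same unpacking for $(n+1, \overline{[k]})$ gives the pile sizes $\{0, 1, \ldots, n-k\}$ paired with either constraint — precisely the previous set together with the two new positions $(n-k, \overline{[k]})$ and $(n-k, [k])$.

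Next I would invoke the monotonicity of mex: if $A \subseteq B$ then $\mathrm{mex}(A) \leq \mathrm{mex}(B)$, because every integer below $\mathrm{mex}(A)$ already appears in $A$ and hence in $B$. Applying this to the sets of Grundy values of the two option collections yields $\mathcal{G}(n, \overline{[k]}) \leq \mathcal{G}(n+1, \overline{[k]})$, which is the claim.

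There is no real obstacle here; the lemma amounts to the bookkeeping observation that enlarging $n$ by one does not remove any of the previously reachable pile sizes (since the smallest allowed subtraction $k+1$ is fixed), it only adds one new reachable pile size $n-k$ under each of the two constraints. The hypothesis $n > k$ is used only to ensure that the option set of $(n, \overline{[k]})$ is already nonempty, so that the recursion is the one written above rather than the degenerate terminal case.
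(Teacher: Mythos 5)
Your proposal is correct and follows essentially the same route as the paper: both arguments observe that every option of $(n,\overline{[k]})$ (pile sizes $0$ through $n-k-1$ under either constraint) is also an option of $(n+1,\overline{[k]})$, and then conclude via the monotonicity of mex under set inclusion. Your write-up is if anything slightly more careful, since it spells out why $\mathrm{mex}(A)\leq\mathrm{mex}(B)$ when $A\subseteq B$, a step the paper leaves implicit.
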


\begin{proof} 
From any $(n,\overline{[k]})$ a player can move to any or $(m,[k])$ or $(m,\overline{[k]})$ where $m<n-k$ since $\overline{[k]}$ contains all integers greater than or equal to $k + 1$. From $(n+1,\overline{[k]})$ a player can move to any $(m,[k])$ or $(m,\overline{[k]})$ where $m<n+1-k$. Therefore, all the options of $(n,\overline{[k]})$
are also options of $(n+1,\overline{[k]})$. Therefore, by the mex rule,
$\mathcal{G}(n+1,\overline{[k]})\geq\mathcal{G}(n,\overline{[k]})$. 
\end{proof}
\begin{thm}
With notation as in the introduction, we have 
\begin{equation}\label{eq:1a}
\mathcal{G}(n,[k])=\begin{cases}
n & 0\leq n\leq2k\\
n+1\mod(k+1) & 2k<n \footnotemark
\end{cases}
\end{equation}
and
\begin{equation}\label{eq:1b}
\mathcal{G}(n,\overline{[k]})=\begin{cases}
0 & 0\leq n<k\\
n-k & k\leq n\leq3k\\
2k+\lceil\frac{n-3k}{k+1}\rceil & 3k<n.
\end{cases}
\end{equation}
\end{thm}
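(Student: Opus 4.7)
The plan is to prove both formulas simultaneously by strong induction on $n$. Writing $f(n):=\mathcal{G}(n,[k])$ and $g(n):=\mathcal{G}(n,\overline{[k]})$, the option structure of the game translates into the recurrences
\[
f(n)=\mathrm{mex}\{f(j),g(j):\max(0,n-k)\le j\le n-1\},\qquad g(n)=\mathrm{mex}\{f(j),g(j):0\le j\le n-k-1\},
\]
where the latter option set is empty for $n<k+1$, giving $g(n)=0$ there. The base cases $n\le 2k$ for $f$ and $n\le 3k$ for $g$ are handled by direct mex computations that show the full option sets equal $\{0,\ldots,n-1\}$ and $\{0,\ldots,n-k-1\}$ respectively; these ranges are wide enough to ``prime'' the inductive step so that both sliding windows always reference only values already known.

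The cleanest piece of the inductive step handles $f(n)$ with $n>3k$. Since $n-k>2k$, the inductive formula gives $f(j)=(j+1)\bmod(k+1)$ throughout the window $[n-k,n-1]$, so these $k$ values are $k$ consecutive residues modulo $k+1$ and exhaust $\{0,\ldots,k\}\setminus\{(n+1)\bmod(k+1)\}$. At the same time, every $g(j)$ with $j\ge 2k+1$ satisfies $g(j)\ge k+1$ by the inductive formula, so the $g$-options contribute nothing below $k+1$. Hence the mex is the missing residue $(n+1)\bmod(k+1)$. The narrower band $2k<n\le 3k$ requires a direct tally using the linear pieces of both $f$ and $g$ in the window, but an analogous accounting again yields $(n+1)\bmod(k+1)=n-2k-1$ as the unique value missing from the option set.

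For $g(n)$ with $n>3k$, Lemma~\ref{thm:1} gives that $g$ is non-decreasing, and the inductive formula shows that $g$ jumps by at most $1$ at each step; therefore $\{g(j):0\le j\le n-k-1\}=\{0,1,\ldots,g(n-k-1)\}$ is an initial segment of the nonnegative integers. Since $n-k-1\ge 2k$, the $f$-values for $0\le j\le n-k-1$ contribute exactly $\{0,1,\ldots,2k\}$: the linear regime $j\le 2k$ supplies all of it, and the cycling regime $j>2k$ adds nothing new because $f(j)\le k$ there. So the mex equals $\max(2k,g(n-k-1))+1$, and substituting the inductive formula for $g(n-k-1)$ yields $g(n)=2k+\lceil(n-3k)/(k+1)\rceil$ after a short calculation.

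The main obstacle will be the boundary bookkeeping around $n=2k,2k+1,3k,3k+1$, where the piecewise formulas change pieces and several cases overlap; in particular, the surjectivity of $g$ onto the initial segment $\{0,\ldots,g(n-k-1)\}$ must be verified across each transition, and the sliding-window option set for $f$ and the cumulative-window option set for $g$ must be checked to pick out the correct pieces of the inductive formula at every boundary.
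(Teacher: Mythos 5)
Your proposal is correct and takes essentially the same route as the paper: a strong induction on $n$ with the same regime boundaries at $2k$ and $3k$, the same count of $k$ consecutive residues modulo $k+1$ for $\mathcal{G}(n,[k])$ when $n>3k$, and the same monotonicity argument for $\mathcal{G}(n,\overline{[k]})$ when $n>3k$. Your explicit observation that the option set there is a union of initial segments (so the mex is $\max(2k,\mathcal{G}(n-k-1,\overline{[k]}))+1$) is a slightly tidier justification of the paper's ``the value can only be one greater than its largest option'' step, but the underlying argument is the same.
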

\footnotetext{$a\mod b$ denotes the smallest non-negative integer that is congruent to $a \mod b$ rather than the entire residue class. }
\begin{proof}
We will approach this proof by alternating between proving base cases 
for \eqref{eq:1a} and \eqref{eq:1b} and then finally using induction to prove the general
case. 
\begin{casenv}
\item $\mathcal{G}(n,\overline{[k]})=0$ for $0\leq n<k$.

The set of legal moves is $\overline{[k]}\cap[n]=\varnothing$. Since there are
no legal moves, we have no options. Thus $\mathcal{G}(n,\overline{[k]})=0$
for $0\leq n<k$. 
\item $\mathcal{G}(n,[k])=n$ for $0\leq n<k$.

The set of legal moves is $[k]\cap[n]=[n]$. Therefore
\begin{equation}\label{eq:3}
(n,[k])=\{(0,[k]),(0,\overline{[k]}),\ldots,(n-1,[k]),(n-1,\overline{[k]})\}.
\end{equation}
From Case (1) we know that $\mathcal{G}(i,\overline{[k]})=0$ for $0\leq i<k$.
Substituting this result into \eqref{eq:3} we get
\begin{equation}
(n,[k])=\{(0,[k]),*0,\ldots,(n-1,[k]),*0\}.
\end{equation}
We proceed with using induction.

\textbf{Base Case:} Since $(0,[k])$ has no options, $\mathcal{G}(0,[k])=0$. 

\textbf{Inductive Step: }Assume that for some $n'<k$ that $\mathcal{G}(n,[k])=n$
for all $n<n'$.
Consider $\mathcal{G}(n'+1,[k])$:
\begin{align}
(n'+1,[k]) &= \{(0,[k]),(0,\overline{[k]}),\ldots,(n',[k]),(n',\overline{[k]})\}
\\
&= \{*0,*0,*1,*0,\ldots,*n',*0\}.
\end{align} 
Thus $\mathcal{G}(n'+1,[k])=n'+1$ by the mex rule.
By induction, $\mathcal{G}(n,[k])=n$. 
\item $\mathcal{G}(n,\overline{[k]})=n-k$ for $k\leq n<2k$.

The set of legal moves is $\overline{[k]}\cap[n]=\{k+1,k+2,\ldots, n\}$.
Therefore,
\begin{equation}\label{eq:6}
(n,[k])=\{(0,[k]),(0,\overline{[k]}),\ldots,(n-k-1,[k]),(n-k-1,\overline{[k]})\}.
\end{equation}
Since $n<2k$, we conclude that $n-k-1<k-1$. Therefore, we know how
to evaluate all the options of $(n,\overline{[k]})$. Substituting those
values into \eqref{eq:6} we get
\begin{equation}
(n,[k])=\{*0,*0,*1,*0,\ldots,*(n-k-1),*0\}.
\end{equation}
Using the mex rule we can evaluate this as $\mathcal{G}(n,[k])=n-k$.
\item $\mathcal{G}(n,[k])=n$ for $k\leq n<2k$.

The set of legal moves is $[k]\cap[n]=[k]$. Therefore
\begin{equation}
(n,[k])=\{(n-k,[k]),(n-k,\overline{[k]}),\ldots,(n-1,[k]),(n-1,\overline{[k]})\}.
\end{equation}
We will prove that $\mathcal{G}(n,[k])=n$ using induction. 

\textbf{Base Case: }We know from Case (2) that $\mathcal{G}(n,[k])=n$
for $0\leq n<k$. This serves as our base case.

\textbf{Inductive Step: }Now assume for some for some $n'<2k$
that it is true that all $\mathcal{G}(n,[k])=n$ for all $n\leq n'$. Consider
$(n'+1,[k])$. We have
\begin{equation}
(n'+1,[k])=\{(n'+1-k,[k]),(n'+1-k,\overline{[k]}),\ldots,(n',[k]),(n',\overline{[k]})\}.
\end{equation}
To evaluate options of the form $(n,[k])$ we use our inductive assumption. The options are
\begin{equation}\label{eq:10}
(n'+1,[k])=\{*(n'+1-k),(n'+1-k,\overline{[k]}),\ldots,*n',(n',\overline{[k]})\}.
\end{equation} 
To evaluate options of the form $\mathcal{G}(i,\overline{[k]})$ we use
our result from Cases (1) and (3). Since $n'+1<2k$, $n'+1-k<k$.
We apply the result from Case (1) to find that $(i,\overline{[k]})=*0$
where $0\leq i<k$. For $k<n<n'+1$, $\mathcal{G}(n,\overline{[k]})=n-k$, which
implies that the options are $*0,*1,\ldots,*(n'+1-k)$. Substituting these results
into \eqref{eq:10} we get
\begin{equation}
(n'+1,[k])=\{*(n'+1-k),*0,\ldots,*n',*(n'-k)\}.
\end{equation}
Using the mex rule here, we determine that $\mathcal{G}(n'+1,[k])=n'+1$. By
 induction $\mathcal{G}(n,[k])=n$ for $k\leq n\leq2k$. 
\item $\mathcal{G}(n,\overline{[k]})=n-k$ for $2k\leq n \leq 3k$.

The set of legal moves is $\overline{[k]}\cap[n]=\{k+1,k+2,\ldots, n\}$.
Therefore the set of options are given by
\begin{equation}\label{eq:16}
(n,\overline{[k]})=\{(0,[k]),(0,\overline{[k]}),\ldots,(n-k-1,[k]),(n-k-1,\overline{[k]})\}.
\end{equation}
Since $n\leq3k$, we know that $n-k-1\leq2k-1$. Therefore, we know
how to evaluate the options of $(n,\overline{[k]})$. Substituting those
values into \eqref{eq:16} we get:
\begin{equation}
(n,\overline{[k]})=\{*0,*0,*1,*0,\ldots,*(n-k-1),*(n-2k-1)\}
\end{equation}
We use the mex rule to find that $\mathcal{G}(n,\overline{[k]})=n-k$.
\item $\mathcal{G}(n,[k])=n+1\mod(k+1)$ for $2k<n$.

The set of legal moves is $[k]\cap[n]=[k]$. Therefore the set of options is 
\begin{equation}
(n,[k])=\{(n-k,[k]),(n-k,\overline{[k]}),\ldots,(n-1,[k]),(n-1,\overline{[k]})\}.
\end{equation}
Now we use induction.

\textbf{Base Cases:} Our base cases will be $(2k+1,[k]),\ldots,(3k+1,[k])$.
The game $(2k+1,[k])$ can be written in terms of its options as 
\begin{equation}
(2k+1,[k])=\{(k+1,[k]),(k+1,\overline{[k]}),\ldots,(2k,[k]),(2k,\overline{[k]})\}.
\end{equation}
Substituting values from our previous casework, we may rewrite the above as 
\begin{equation}
(2k+1,[k])=\{*(k+1),*(1),\ldots,*(2k),*(k)\}
\end{equation}
We use the mex rule to get $\mathcal{G}(2k+1,[k])=0=(2k+2)\mod(k+1)$.
The game $(2k+2,[k])$ can be written in terms of its options as
\begin{equation}
(2k+2,[k])=\{*(k+2),*(2),\ldots,*0,*(k+1)\}.
\end{equation}
We use the mex rule to get $\mathcal{G}(2k+2,[k])=1=(2k+3)\mod(k+1)$.
We use the same process to evaluate our other base cases. \\
\textbf{Inductive Step: } For some $n'>3k+1$
assume that $\mathcal{G}(n,[k])=(n+1)\mod(k+1)$ for all $2k<n\leq n'$.
We evaluate $\mathcal{G}(n'+1,[k])$ as
\begin{equation}\label{eq:21}
(n'+1,[k])=\{(n'-k+1,[k]),(n'-k+1,\overline{[k]}),\ldots,(n',[k]),(n',\overline{[k]})\}.
\end{equation}

Because of the mex rule and Lemma \ref{thm:1}, we may disregard all the
$(n,\overline{[k]})$ because $\mathcal{G}(n,\overline{[k]})\geq2k-1$, while
$*(k+1)$ is missing from the options list (i.e. $\mathcal{G}(n'+1,[k])\leq k+1$).
After removing such options, we can rewrite \eqref{eq:21} as
\begin{equation}
(n'+1,[k])=\{(n'-k+1,[k]),\ldots,(n',[k])\}.
\end{equation}
We use the the mex rule here to see that $\mathcal{G}(n'+1,[k])=\mathcal{G}(n'-k,[k])=n'-k+1\mod(k+1)=n'+2\mod(k+1)$.
By induction, $\mathcal{G}(n,[k])=n+1\mod(k+1)$
for $2k<n$.
\item $\mathcal{G}(n,\overline{[k]})=2k+\lceil\frac{n-3k}{k+1}\rceil$.

The set of legal moves is $\overline{[k]}\cap[n]=\{k+1,k+2,\ldots, n\}$.
Therefore the options are 
\begin{equation}
(n,[k])=\{(0,[k]),(0,\overline{[k]}),\ldots,(n-k-1,[k]),(n-k-1,\overline{[k]})\}.
\end{equation}

\textbf{Base Cases:} The base cases are $(3k+1,\overline{[k]}),\ldots,(4k+1,\overline{[k]})$.We rewrite $(3k+1,\overline{[k]})$ in terms of its options as
\begin{equation}
(3k+1,\overline{[k]})=\{(0,[k]),(0,\overline{[k]}),\ldots,(2k,[k]),(2k,\overline{[k]})\}.
\end{equation}

From our previous casework and the mex rule, we conclude that
$\mathcal{G}(3k+1,\overline{[k]})=2k+1=2k+\lceil\frac{3k+1-3k}{k+1}\rceil$.
The other base cases have very similar options and also evaluate to
$2k+1$. 

\textbf{Inductive Step: }Assume for some $n'>4k+1$ that \eqref{eq:1b} holds
true for $n'-k$ to $n'$ (inclusive). 

From our previous casework, we notice that the options of the form
$(n,[k])$ take on every values in $\{*0,*1,\ldots,*2k\}$and no other
values. Consider $(m,\overline{[k]})$, an option of $(n,\overline{[k]})$. Because
stones need to be taken from $n$ to reach $m$, $m<n$. We know that $\mathcal{G}(n,[k])$
is monotonic from Lemma \ref{thm:1}. However, if $(m,\overline{[k]})$ is an option
of $(n,\overline{[k]})$, then $\mathcal{G}(n,\overline{[k]})\neq \mathcal{G}(m,\overline{[k]})$
by the mex rule. Since $\mathcal{G}(n,\overline{[k]})\geq \mathcal{G}(m,\overline{[k]})$
and $\mathcal{G}(n,\overline{[k]})\neq\mathcal{G}(m,\overline{[k]})$
(by the mex rule), we know that $\mathcal{G}(n,\overline{[k]})>\mathcal{G}(m,\overline{[k]})$.
Therefore, the value of $(n,\overline{[k]})$ is greater than the value
of any of its options. However, by the mex rule, it can only
be 1 greater than any of its options. Using this we will prove the
last part of the formula. We notice that $(n'-k,\overline{[k]})$ is the largest option
of $(n'+1,\overline{[k]})$. Therefore, $\mathcal{G}(n'+1,\overline{[k]})=\mathcal{G}(n'-k,\overline{[k]})+1=2k+\lceil\frac{n'-k-3k}{k+1}\rceil+1=2k+\lceil\frac{n'+1-3k}{k+1}\rceil$.
By induction \eqref{eq:1b} is true. 
\end{casenv}
\end{proof}

\section{Arithmetic Progressions} \label{sec:arith}

Now we will proceed with investigating Grundy values for games where
$S=\{b+ic:\ 0\leq i\leq i_{\max},\ b,c,i,i_{\max}\in\mathbb{N},\frac{c+2}{2}\leq b<c\}$. 
Here we extend the results in \cite{key-2} where subtraction sets
of the form $\{b+ic:\ 0\leq i,\ b,c,i\in\mathbb{N},\frac{c+2}{2}\leq b<c\}$
were considered by considering the behavior of Grundy values when the infinite arithmetic progression is chopped off to form a finite arithmetic progression. For any given subtraction set $S = \{b+ic:\ 0\leq i\leq i_{\max},\ b,c,i,i_{\max}\in\mathbb{N},\frac{c+2}{2}\leq b<c\}$
we call set $S'=\{b+ic:\ 0\leq i,\ b,c,i\in\mathbb{N}\}$. This helps
us use the results of \cite{key-2} in proving our results.

We observe that the sequence of values for $\mathcal{G}(n,S)$
is eventually periodic with period $p=2b+i_{\max}c$.
A list of Grundy values of $(n,S)$ and $(n,\overline{S})$ where $S = \{8, 21, 34, 47\}$ is given as an example in Figure~\ref{fig:grundytable}.

\begin{figure}
\includegraphics[clip,height=8in]{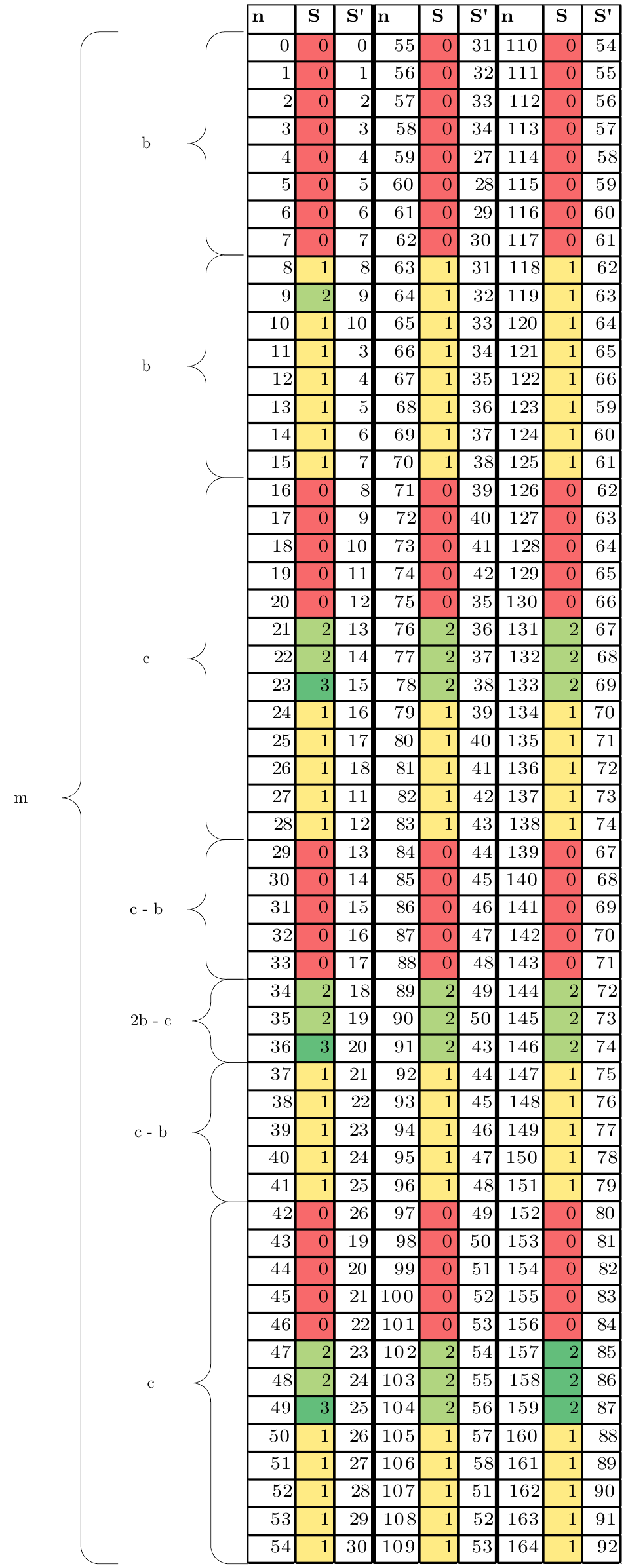}

\caption{Labeled Grundy value table for $S=\{8,21,34,47\}$} 
\label{fig:grundytable}
\end{figure}

\begin{lem} \label{thm:3.1}
If $n<p$, $\mathcal{G}(n,S)=\mathcal{G}(n,S')$ and $\mathcal{G}(n,\overline{S})=\mathcal{G}(n,\overline{S'})$. 
\end{lem}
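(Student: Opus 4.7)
The plan is to prove both equalities simultaneously by strong induction on $n$, with the single key combinatorial observation being that the legal move set from a heap of size $n < p$ is the same whether we play under $S$ or under $S'$ (and likewise under $\overline{S}$ versus $\overline{S'}$).

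First, I would record the following auxiliary fact: for every $n < p$, we have $S \cap [n] = S' \cap [n]$, and consequently $\overline{S} \cap [n] = \overline{S'} \cap [n]$. This is a one-line computation using the hypothesis $b < c$. The smallest element of $S' \setminus S$ is $b + (i_{\max}+1)c$, and
\begin{equation}
b + (i_{\max}+1)c - p = b + (i_{\max}+1)c - (2b + i_{\max}c) = c - b \geq 1,
\end{equation}
so $b + (i_{\max}+1)c > p > n$, hence no element of $S' \setminus S$ lies in $[n]$.

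Next I would set up strong induction on $n$. The base case $n = 0$ is immediate since there are no legal moves, so all four Grundy values vanish. For the inductive step, fix $n < p$ and assume the conclusion holds for every $m < n$. Writing out the options,
\begin{equation}
(n,S) = \{(n-s,S),(n-s,\overline{S}) : s \in S\cap[n]\},
\end{equation}
and similarly for $(n,S')$ with $S' \cap [n]$ in place of $S \cap [n]$. By the auxiliary fact, the two move sets $\{s\}$ coincide, and for each such $s$ we have $n - s < n < p$, so the inductive hypothesis yields $\mathcal{G}(n-s,S) = \mathcal{G}(n-s,S')$ and $\mathcal{G}(n-s,\overline{S}) = \mathcal{G}(n-s,\overline{S'})$. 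Thus the multisets of option values coincide, and the mex rule gives $\mathcal{G}(n,S) = \mathcal{G}(n,S')$. The argument for $\mathcal{G}(n,\overline{S}) = \mathcal{G}(n,\overline{S'})$ is identical, using $\overline{S} \cap [n] = \overline{S'} \cap [n]$.

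The entire argument is essentially an unpacking of a localization principle: the game tree of $(n,S)$ and that of $(n,S')$ are isomorphic as long as the play never descends to heap sizes where the elements of $S' \setminus S$ become legal subtractions. The only real work is the inequality $c - b \geq 1$ showing that $p$ is small enough to keep us inside this safe region, and there is no genuine obstacle beyond checking that computation.
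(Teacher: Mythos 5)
Your proposal is correct and takes essentially the same approach as the paper: both rest on the observation that $p < b+(i_{\max}+1)c$, so no element of $S'\setminus S$ is a legal subtraction from a heap of size $n<p$. The only difference is that you make explicit the strong induction identifying $(m,S)$ with $(m,S')$ for $m<n$, a step the paper treats as immediate when it asserts the options are ``identical.''
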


\begin{proof}
Since $p=2b+i_{\max}c<b+(i_{\max}+1)c$, the extra moves
in $S'$ do not affect the game, since the next player may not remove more than
$n$ stones from $(n,S)$ anyway. So the options of $(n,S)$ and
$(n,S')$ are identical when $n<p$. Therefore, $\mathcal{G}(n,S)=\mathcal{G}(n,S')$
and $\mathcal{G}(n,\overline{S})=\mathcal{G}(n,\overline{S'})$.
\end{proof}
\begin{lem}\label{thm:3.2}
If $n\geq2$, then $\mathcal{G}(n,S)\geq2$.
\end{lem}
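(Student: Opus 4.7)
The plan is to verify the mex criterion directly: to show $\mathcal{G}(n,\overline{S})\ge 2$ (which is how I read the statement, since $\mathcal{G}(n,S)=0$ for every $n<b$), it suffices to produce options of $(n,\overline{S})$ with Grundy values $0$ and $1$ respectively. The hypothesis $\frac{c+2}{2}\le b<c$ forces $c\ge 4$ and $b\ge 3$, so in particular $1,2\in\overline{S}$, the smallest element of $S$ is $b\ge 3$, and consecutive elements of $S$ are at distance $c>b$ apart.

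For a $*0$-option, I will exploit the basic fact that $(m,S)=*0$ whenever $m<b$, since $S\cap[m]=\varnothing$. When $n\in\overline{S}$, subtracting $s=n$ lands at $(0,\overline{S})=*0$. When $n\in S$ instead, I use a gap argument: among the $b$ integers in $\{n-b+1,\dots,n\}$ only $n$ itself can lie in $S$, because consecutive elements of $S$ differ by $c>b$. The remaining $b-1\ge 2$ integers are legal subtractions in $\overline{S}\cap[n]$, and each reaches a position $(n-s,S)$ with $n-s<b$, hence at $*0$.

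For a $*1$-option, I will compute two handy base values: $(1,\overline{S})=*1$ (its only option is $(0,S)=*0$, mex $1$) and $(b,S)=*1$ (the unique subtraction leads to $(0,S)=(0,\overline{S})=*0$, mex $1$). Reaching $(1,\overline{S})$ from $(n,\overline{S})$ requires $s=n-1\in\overline{S}$; reaching $(b,S)$ requires $s=n-b\in\overline{S}\cap\mathbb{Z}^+$. For $n\le b$ the first alternative is immediate, since $n-1<b$. For $n>b$ the key step is to rule out that $n-1$ and $n-b$ lie simultaneously in $S$: writing $n-1=b+ic$ and $n-b=b+jc$ and subtracting gives $b-1=(i-j)c$, which is impossible because $0<b-1<c$.

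I expect the main obstacle to be precisely this last modular argument: it is the only place where the structural assumption $b<c$ is used in an essential way, and it is the reason the ``both candidates fail'' scenario for the $*1$-option cannot occur. A weaker hypothesis on $b$ and $c$ would genuinely break the argument, whereas everything else reduces to routine bookkeeping about membership in $S$ versus $\overline{S}$.
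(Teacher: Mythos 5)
Your proof is correct and follows essentially the same strategy as the paper's: exhibit options of $(n,\overline{S})$ with values $*0$ and $*1$ using the facts $(m,S)=*0$ for $m<b$, $(b,S)=*1$, $(1,\overline{S})=*1$, and the observation that $n-1$ and $n-b$ cannot both lie in $S$ (you rightly read the statement as concerning $\overline{S}$, which is what the paper's own proof and its later applications require). The only difference is cosmetic: the paper cases on $n-1\in S$ versus $n-1\in\overline{S}$ and deferring the incongruence $b-1\ne(i-j)c$ to the cited reference, while you organize the $*0$ and $*1$ options separately and spell out that modular step.
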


\begin{proof}
This proof is virtually identical to the proof provided for a similar
result in \cite{key-2}. If $n-1\in\overline{S}$, then $(n,\overline{S})$
has the options $(1,S)$ and $(1,\overline{S})$. $\mathcal{G}(1,S)=0$
and $\mathcal{G}(1,S)=1$, therefore $\mathcal{G}(n,S)\geq2$. If
$n-1\in S$, $n\in\overline{S}$ and $n-b\in\overline{S}$. Since $(0,S)=*0$
and $(b,S)=*1$ are options of $(n,\overline{S})$, $\mathcal{G}(n,S)\geq2$.
\end{proof}
\begin{lem}\label{thm:3.3}
If $0\leq j\leq b-1$, $\mathcal{G}(p+j,S)=0$.
\end{lem}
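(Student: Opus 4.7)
The plan is to apply the mex rule directly and show that no option of $(p+j,S)$ has Grundy value $0$. First I would enumerate the options: since $\max S = b+i_{\max}c = p-b \leq p+j$, every $s = b+ic \in S$ is a legal move, and the options of $(p+j,S)$ are the pairs $(m_i,S)$ and $(m_i,\overline{S})$ for $0\leq i\leq i_{\max}$, where
\begin{equation*}
m_i = p+j - (b+ic) = b + j + (i_{\max}-i)c.
\end{equation*}
Every such $m_i$ lies in $[b,p-1]$.

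The $\overline{S}$-type options are immediately handled by Lemma \ref{thm:3.2}: each $m_i \geq b \geq 2$, so $\mathcal{G}(m_i,\overline{S}) \geq 2$, which rules out $*0$. For the $S$-type options I would exhibit a $*0$ option of $(m_i,S)$ directly. Setting $i' = i_{\max}-i$, the element $s' = b + i'c$ lies in $S$ and satisfies $s' = m_i - j \leq m_i$, so it is a legal move from $(m_i,S)$. The resulting heap of size $j$ has no legal subtractions since $0 \leq j \leq b-1$ and $S \cap [j] = \varnothing$, so $\mathcal{G}(j,S) = 0$. Thus $(j,S) = *0$ is an option of $(m_i,S)$, forcing $\mathcal{G}(m_i,S) \neq 0$.

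Combining the two cases, none of the options of $(p+j,S)$ has Grundy value $0$, so by the mex rule $\mathcal{G}(p+j,S) = 0$. The key structural observation driving this argument is the pairing $s + s' = (b+ic) + (b+(i_{\max}-i)c) = 2b + i_{\max}c = p$ within $S$: every move of size $s$ from $(p+j,S)$ admits a complementary follow-up of size $s'$ that deposits the heap into the terminal zone $\{0,1,\ldots,b-1\}$. I do not expect significant obstacles in carrying this out; in particular, the argument is self-contained and does not need to invoke the explicit Grundy formulas for the infinite subtraction set from \cite{key-2}.
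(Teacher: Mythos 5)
Your proof is correct, but it takes a genuinely different and more self-contained route than the paper's. The paper enumerates the same options $(b+kc+j,S)$ and $(b+kc+j,\overline{S})$, notes that each has fewer than $p$ stones, and then invokes Lemma \ref{thm:3.1} to transfer the evaluation to the infinite progression $S'$, relying on the explicit Grundy formulas of \cite{key-2} to conclude that all of these options have positive value. You avoid \cite{key-2} entirely: the $\overline{S}$-type options are disposed of by Lemma \ref{thm:3.2} (which, as the paper's own later uses confirm, is really the statement $\mathcal{G}(n,\overline{S})\geq 2$ despite the $S$ appearing in its printed statement), and for each $S$-type option $(m_i,S)$ you exhibit the complementary move $s'=p-s\in S$, which lands on the terminal position $(j,S)=*0$ and hence forces $\mathcal{G}(m_i,S)\neq 0$ by the mex rule. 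The pairing $s+s'=p$ within $S$ is a clean structural observation that the paper never makes explicit, and your version has the advantage of verifying the positivity of the options' values directly rather than outsourcing it to the formulas for $S'$; the paper's version is shorter on the page but leans on the external reference for the key fact that the options are all nonzero.
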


\begin{proof}
The options of $(p+j,S)$ are of the form $(b+kc+j,S)$ and $(b+kc+j,\overline{S})$
where $0<k<i_{\max}$. Since in these options the number of stones
is less than $p$ we may evaluate them by using Lemma \ref{thm:3.1}. By using Lemma \ref{thm:3.1}, we see that they are all greater than $0$. Therefore,
$\mathcal{G}(p+j,S)=0$ by the mex rule.
\end{proof}

\begin{lem}\label{thm:3.4}
If $b\leq j\leq2b-1$, $\mathcal{G}(p+j,S)=1$.
\end{lem}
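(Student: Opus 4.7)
The plan is to enumerate the options of $(p+j, S)$, exhibit one option with Grundy value $0$, and show no option takes Grundy value $1$; the mex rule then forces $\mathcal{G}(p+j, S) = 1$. First I observe that the largest element of $S$ is $b + i_{\max}c = p - b \leq p + j$, so every element of $S$ is a legal move, and the options of $(p+j, S)$ are exactly
\[
(p+j-b-ic, S) \text{ and } (p+j-b-ic, \overline{S}), \qquad 0 \leq i \leq i_{\max}.
\]

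Next I pick out a $*0$ option. Taking $i = 0$ produces the option $(p+(j-b), S)$; since $0 \leq j-b \leq b-1$, Lemma \ref{thm:3.3} gives $\mathcal{G}(p+(j-b), S) = 0$, so $0$ appears among the option values.

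Then I rule out $*1$ among the option values. For options of the form $(m, S)$ with $i \geq 1$, the remaining count is $m = p+j-b-ic \geq b + j \geq 2b \geq 2$ (using $b \geq (c+2)/2 \geq 2$), so Lemma \ref{thm:3.2} gives $\mathcal{G}(m, S) \geq 2 \neq 1$. For options of the form $(m, \overline{S})$ with $i \geq 1$ we have $m < p$, and Lemma \ref{thm:3.1} reduces the Grundy value to that of $(m, \overline{S'})$; I then appeal to the analysis of the infinite arithmetic progression game in \cite{key-2} to conclude that $*1$ does not occur at any such $m$. The remaining option $(p+(j-b), \overline{S})$ lies outside the range covered by Lemma \ref{thm:3.1}, and must be handled by directly expanding its options and invoking Lemmas \ref{thm:3.2} and \ref{thm:3.3} together with the same facts from \cite{key-2}.

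The main obstacle is this last step: verifying that none of the $(m, \overline{S})$ options carries Grundy value $1$. The $(m, S)$ side is dispatched cleanly by Lemmas \ref{thm:3.2} and \ref{thm:3.3}, but for the $(m, \overline{S})$ side one must import precise information from \cite{key-2} about where the value $1$ occurs in the $\overline{S'}$-game, and then separately confirm that $\mathcal{G}(p+(j-b), \overline{S}) \neq 1$ by expanding its options and checking that at least two distinct small Grundy values are realized among them so that its mex is at least $2$. Once those facts are assembled, the mex computation for $(p+j, S)$ closes in one line.
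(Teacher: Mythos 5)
Your overall strategy (exhibit a $*0$ option via Lemma \ref{thm:3.3}, then rule out $*1$ and apply the mex rule) has the right shape, and your identification of $(p+(j-b),S)$ as a $*0$ option is exactly the paper's $k=i_{\max}$ option. But the step where you dispatch the remaining $(m,S)$ options rests on a misreading of Lemma \ref{thm:3.2}. Although that lemma is \emph{stated} as ``$\mathcal{G}(n,S)\geq 2$,'' its proof only ever analyzes options of $(n,\overline{S})$, and the statement as written is flatly contradicted by Lemma \ref{thm:3.3} (which gives $\mathcal{G}(p+j,S)=0$ with $p+j\geq 2$) --- and indeed by your own first step. The lemma is really $\mathcal{G}(n,\overline{S})\geq 2$, and every other invocation of it in the paper uses it that way. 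Consequently your claim that the $(m,S)$ options with $i\geq 1$ all have Grundy value $\geq 2$ is unsupported and in fact false: for $b\leq j\leq c$ these options sit at positions of the form $2b+kc+x$ with $0\leq x\leq c-b$, and by Lemma \ref{thm:3.1} together with the values computed in \cite{key-2} they evaluate to $0$, not to something $\geq 2$. (They are still never equal to $1$, which is all the argument needs, but that fact must come from the explicit \cite{key-2} values after the Lemma \ref{thm:3.1} reduction; this is precisely the case split $b\leq j\leq c$ versus $c\leq j\leq 2b-1$ that the paper performs and that your proof omits.)

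The flip side is that the part you flag as ``the main obstacle'' --- showing no $(m,\overline{S})$ option is a $*1$ --- becomes the easy part once Lemma \ref{thm:3.2} is read correctly: it gives $\mathcal{G}(m,\overline{S})\geq 2$ for every $m\geq 2$, with no restriction to $m<p$, so it disposes of all the $\overline{S}$ options at once, including $(p+(j-b),\overline{S})$. Your proposed detour through \cite{key-2} for the $m<p$ options and a hand-expansion for the single option with $m\geq p$ is unnecessary, and in any case you leave that expansion unexecuted, so as written the proof is also incomplete there. To repair the argument: keep your $*0$ option, kill every $\overline{S}$ option with Lemma \ref{thm:3.2}, and handle the $(m,S)$ options with $i\geq 1$ via Lemma \ref{thm:3.1} plus the value table from \cite{key-2}, splitting on whether $j\leq c$ or $j\geq c$ as the paper does.
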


\begin{proof}
First, we consider games where $b\leq j\leq c$. $(2b+i_{\max}c+j,S)$
has options of the form $(b+kc+j,S)$ and $(b+kc+j,\overline{S})$. Since
$b\leq j\leq c$, these can be rewritten as $(2b+kc+x,S)$ and $(2b+kc+x,\overline{S})$
for some $0\leq x\leq c-b$. From the results in \cite{key-2} and
our result in Lemma \ref{thm:3.3}, these options all evaluate to 0. If $c\leq j\leq2b-1$,
we may rewrite $(b+kc+j,S)$ and $(b+kc+j,\overline{S})$ as $(b+(k+1)c+x,S)$
and $(b+(k+1)c+x,\overline{S})$ for $0\leq x\leq2b-c-1$. From Lemma \ref{thm:3.2}
we know $\mathcal{G}(b+(k+1)c+x,\overline{S})>2$. $\mathcal{G}(b+(k+1)c+x,S)>1$
from the results in \cite{key-2}. Therefore, $\mathcal{G}(p+j,S)=1$.
\end{proof}
\begin{lem}\label{thm:3.5}
If $0\leq i'\leq i_{\max}$ and $0\leq j\leq c-b$, then $\mathcal{G}(p+2b+i'c+j,S)=0$.
\end{lem}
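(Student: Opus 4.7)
The plan is to show every option of $(p+2b+i'c+j,S)$ has nonzero Grundy value, so the mex rule forces $\mathcal{G}(p+2b+i'c+j,S)=0$. A legal move subtracts some $s=b+kc\in S$ with $0\leq k\leq i_{\max}$, yielding options $(p+b+\ell c+j,S)$ and $(p+b+\ell c+j,\overline{S})$ where $\ell:=i'-k$ ranges over $\{i'-i_{\max},\ldots,i'\}$. The $\overline{S}$-options are immediate: Lemma~\ref{thm:3.2}, whose proof in fact establishes $\mathcal{G}(n,\overline{S})\geq 2$ for $n\geq 2$, takes care of them, so the real task reduces to the $S$-options.

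I would split the $S$-case by the sign of $\ell$. If $\ell=0$, the standing hypothesis $\tfrac{c+2}{2}\leq b$ gives $j\leq c-b\leq b-2$, so $b\leq b+j\leq 2b-1$ and Lemma~\ref{thm:3.4} yields Grundy value $1$. If $\ell<0$, the position $p+b+\ell c+j$ drops strictly below $p$ (for $\ell=-1$ this uses $j<c-b$), so Lemma~\ref{thm:3.1} reduces the computation to $\mathcal{G}(p+b+\ell c+j,S')$. Rewriting the position as $2b+(i_{\max}+\ell)c+(b+j)$ and using $b\leq b+j\leq c-1$, it lies in one of the ``Grundy $1$'' segments for $S'$ from the closed form in~\cite{key-2}, so is nonzero.

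The hard case is $\ell\geq 1$. These options lie above $p$, so Lemma~\ref{thm:3.1} is unavailable, and they fall in the ``gap'' intervals sitting between the $\mathcal{G}=0$ blocks described by Lemmas~\ref{thm:3.3}, \ref{thm:3.4}, and Lemma~\ref{thm:3.5} itself. My approach is to prove in parallel a companion claim asserting $\mathcal{G}(p+b+\ell c+j,S)=1$ for $1\leq\ell\leq i_{\max}$ and $0\leq j<c-b$, by a joint induction on $\ell$ (equivalently, on heap size). The companion claim's options split into the same three types, with its $\ell\geq 1$ sub-case either dropping below $p$ (resolved via Lemma~\ref{thm:3.1} and~\cite{key-2}) or matching the present Lemma~\ref{thm:3.5}'s range at a smaller $i'$, closing the induction. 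Once the companion claim is in hand, every $\ell\geq 1$ $S$-option has Grundy value $1$, completing the mex argument and giving $\mathcal{G}(p+2b+i'c+j,S)=0$.
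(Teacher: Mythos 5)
Your mex strategy and your handling of the $\overline{S}$-options, the $\ell=0$ options, and the below-$p$ options all track the paper's proof. The divergence, and the gap, is in the $\ell\geq 1$ case: the companion claim $\mathcal{G}(p+b+\ell c+j,S)=1$ for $1\leq\ell\leq i_{\max}$, $0\leq j<c-b$ is false, so the joint induction cannot close. Take the paper's running example $S=\{8,21,34,47\}$ ($b=8$, $c=13$, $p=55$) and the companion position $77=p+b+c+1$ (so $\ell=1$, $j=1$). Among its options are $(56,S)=(p+1,S)$, which is $*0$ by Lemma~\ref{thm:3.3}, and $(69,S)=(p+14,S)$, which is $*1$ by Lemma~\ref{thm:3.4}; hence $\mathcal{G}(77,S)\geq 2$. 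The structural reason is that for $0\leq j<2b-c$ (a nonempty range, since $b\geq\frac{c+2}{2}$) the option $p+(\ell-k)c+j$ with $\ell-k=1$ lands in the value-$1$ band $[p+b,p+2b)$ of Lemma~\ref{thm:3.4}, not in a value-$0$ block, so your assertion that these sub-options ``match Lemma~\ref{thm:3.5}'s range at a smaller $i'$'' fails and the companion position's option set contains both $0$ and $1$.

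The repair is to prove only what the mex argument actually needs: that each $\ell\geq 1$ option is nonzero. This requires no induction at all, because $(p+b+\ell c+j,S)$ has the option $(p+j,S)$ (subtract $b+\ell c\in S$), which is $*0$ by Lemma~\ref{thm:3.3}, and a position with a $*0$ option cannot itself be $*0$. This ``option of an option'' observation is exactly the paper's argument for the above-$p$ options (phrased there as: the option has an option $(2b+i'c+j,S)=*0$). Separately, your parenthetical ``for $\ell=-1$ this uses $j<c-b$'' points at a genuine defect in the statement itself rather than in your argument: when $j=c-b$ and $i'<i_{\max}$, the move $k=i'+1$ lands exactly on $(p,S)=*0$, so $\mathcal{G}(p+2b+i'c+(c-b),S)\neq 0$ and the lemma fails at that endpoint; the intended range is $0\leq j\leq c-b-1$.
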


\begin{proof}
Consider the games of form $(p+2b+i'c+j,S)$ where $0\leq j\leq c-b$.
These games have options of the form $(3b+(i_{\max}+i'-k)c+j,S)$ and
$(3b+(i_{\max}+i'-k)c+j,\overline{S})$ where $0\leq k\leq i_{\max}$. As we will see, the
options of the latter kind can be ignored since they are all greater
than or equal to 2. We consider the options described before. If $3b+(i_{\max}+i'-k)c+j\leq p$
then that option evaluates to $1$ from the results in \cite{key-2}.
If $3b+(i_{\max}+i'-k)c+j>p$, then we consider the option's options.
The option has an option of the form $(2b+i'c+j,S)=*0$ by results
described in \cite{key-2}. Therefore, the option itself cannot b.
$*0$. Additionally, we note that none of the options with $\overline{S}$ as the subtraction set have values equal to $0$ or $1$ by Lemma \ref{thm:3.2}.
Therefore, we use the mex rule to evaluate the game as $\mathcal{G}(p+2b+i'c+j,S)=0$.
\end{proof}
\begin{lem}\label{thm:3.6}
If $0\leq i'\leq i_{\max}$ and $0\leq j\leq c-b$, then $\mathcal{G}(p+i'c+3b+j,S)=1$.
\end{lem}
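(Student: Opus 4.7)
My plan is to adapt the proof of Lemma~\ref{thm:3.5}. Set $n = p + i'c + 3b + j$. The options of $(n,S)$ are $(n - (b + kc), S)$ and $(n - (b + kc), \overline{S})$ for $0 \leq k \leq i_{\max}$. Every $\overline{S}$-option has heap at least $2$, so by Lemma~\ref{thm:3.2} its Grundy value is at least $2$ and hence cannot equal $1$; these options may be set aside. It therefore suffices to exhibit an $S$-option of Grundy value $0$ and to rule out any $S$-option of Grundy value $1$, after which the mex rule forces $\mathcal{G}(n, S) = 1$.

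To produce $*0$, I would take $k = 0$: the resulting heap is $n - b = p + 2b + i'c + j$, which lies in the range of Lemma~\ref{thm:3.5} and hence has Grundy value $0$. To block $*1$, I would compute $n - (b + kc) = p + 2b + (i' - k)c + j$ and split on $k$. For $1 \leq k \leq i'$, the heap $p + 2b + (i' - k)c + j$ again lies in the range of Lemma~\ref{thm:3.5}, giving Grundy value $0$. For $k = i' + 1$ (when $i' < i_{\max}$), the heap is $p + (2b - c + j)$; since $b \geq (c+2)/2$ forces $2b - c \geq 2$, the offset $2b - c + j$ lies in $[0, b-1]$ provided $j \leq c - b - 1$, so Lemma~\ref{thm:3.3} yields Grundy value $0$. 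For $k \geq i' + 2$, the heap drops below $p$, and by Lemma~\ref{thm:3.1} the Grundy value coincides with that of the corresponding position in the infinite progression $S'$; the explicit Grundy value description in \cite{key-2} shows that heaps in this residue class do not take the value $1$.

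The main obstacle is the subcase $k \geq i' + 2$, which requires a careful appeal to the Horrocks--Trenton classification to confirm that each heap $p + (i' - k)c + 2b + j < p$ avoids Grundy value $1$. A secondary subtlety appears at the boundary $j = c - b$ with $i' < i_{\max}$: the $k = i' + 1$ option has heap $p + b$ and, by Lemma~\ref{thm:3.4}, Grundy value $1$, which would break the argument. This indicates the intended range is $0 \leq j \leq c - b - 1$, with the apparent boundary point $j = c - b$ already absorbed into the next value of $i'$ under Lemma~\ref{thm:3.5}.
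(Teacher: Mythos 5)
You follow the same route as the paper: discard the $\overline{S}$-options via Lemma~\ref{thm:3.2}, obtain a $*0$ option from $k=0$ via Lemma~\ref{thm:3.5}, and exclude $*1$ from the remaining $S$-options, deferring the options with heap below $p$ to Lemma~\ref{thm:3.1} and the classification in \cite{key-2} exactly as the paper does (so that last step is no less detailed than the paper's own). Where you genuinely improve on the paper is the option with heap $p+2b-c+j$ (your $k=i'+1$): the paper sweeps every option above $p$ into a blanket appeal to Lemma~\ref{thm:3.5}, but that heap is not of the form $p+2b+i''c+j'$ with $i'',j'\ge 0$, so Lemma~\ref{thm:3.5} does not cover it; your appeal to Lemma~\ref{thm:3.3} is the correct reference, and it is precisely what exposes the boundary failure. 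Your objection at $j=c-b$ is genuine: for $i'<i_{\max}$ the heap $p+3b+i'c+(c-b)$ equals $p+2b+(i'+1)c$, which Lemma~\ref{thm:3.5} (with indices $i'+1$, $j=0$) declares to be $*0$ while Lemma~\ref{thm:3.6} declares it to be $*1$, so the two lemmas contradict each other on their overlap. Concretely, for $S=\{8,21,34,47\}$ (so $b=8$, $c=13$, $p=55$) the position $(84,S)=(p+2b+c,S)$ has the option $(63,S)=(p+b,S)=*1$ by Lemma~\ref{thm:3.4}, so it cannot itself be $*1$. Your proposed repair --- restricting to $0\le j\le c-b-1$ and letting Lemma~\ref{thm:3.5} own the boundary heap --- is the right one (a matching restriction is in fact needed in Lemma~\ref{thm:3.5} itself, since at its own $j=c-b$ boundary the move $b+(i'+1)c$ reaches $(p,S)=*0$). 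In short: same strategy as the paper, executed more carefully, and your diagnosis of the defect in the stated range is correct.
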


\begin{proof}
The options of $(p+i'c+3b+j,S)$ are of the form $(4b+kc+j,S)$. If
$4b+kc+j>p$ then we know from Lemma \ref{thm:3.5} that $(4b+kc+j,S)=*0$. Else,
we can rewrite $(4b+kc+j,S)$ as $(2b+(k+1)c+(2b-c)+j,S)$. From the
results in \cite{key-2}, we know that these options are either 0
or greater than 1 (they all have options that are $*1$). Therefore,
$\mathcal{G}(p+i'c+3b+j,S)=1$.
\end{proof}
\begin{lem}\label{thm:3.7}
If $0\leq i'\leq i_{\max}$ and $0\leq j\leq2b-c$, then $\mathcal{G}(p+i'c+3b+j,S)>1$.
\end{lem}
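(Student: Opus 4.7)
My plan is to apply the mex rule: I will exhibit two options of $(n,S)$, where $n = p + i'c + 3b + j$, one with Grundy value $0$ and the other with Grundy value $1$. Together these force $\mathcal{G}(n,S) \ge 2$. By Lemma~\ref{thm:3.2}, every option of the form $(n-s,\overline{S})$ with $n-s \ge 2$ has Grundy value at least $2$, so such options cannot supply the values $0$ or $1$ that I need. I therefore restrict to the $S$-options $(n-s,S)$ coming from $s = b+kc \in S$, which take the form $(p+(i'-k)c + 2b + j, S)$.

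The canonical Grundy-$0$ option is obtained by taking $s = b$ (so $k=0$); the image $(p+2b+i'c+j, S)$ is identified as $*0$ by Lemma~\ref{thm:3.5}. For a Grundy-$1$ option, the canonical choice is $s = b+(i'+1)c$, valid whenever $i' \le i_{\max}-1$; this gives image $(p + (2b-c+j),S)$, whose offset $2b-c+j$ lies in Lemma~\ref{thm:3.4}'s Grundy-$1$ interval $[b, 2b-1]$ precisely when $c-b \le j$. The upper bound $2b-c+j \le 2b-1$ is automatic, since $j \le 2b-c \le c-1$ whenever $b<c$.

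The remaining subcases are where the argument gets intricate. For $j < c-b$, the move $s = b+(i'+1)c$ lands in Lemma~\ref{thm:3.3}'s Grundy-$0$ block rather than the Grundy-$1$ block, so I would need to find the Grundy-$1$ option elsewhere: either via $s = b+(i'-1)c$, which maps to a Lemma~\ref{thm:3.6} position (workable for instance at $j=0$ when $i' \ge 1$), or by taking a larger $k$ so that $n-s$ drops below $p$, after which Lemma~\ref{thm:3.1} lets me read off the Grundy value from the infinite-progression analysis of~\cite{key-2}. The corner case $i' = i_{\max}$ likewise forbids the $+1$-shift, so here I would again appeal to Lemma~\ref{thm:3.1} together with the Grundy tables from~\cite{key-2}.

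I expect the main obstacle to be this case analysis, which hinges on the interplay between the cut-off $i_{\max}$ of the finite subtraction set and the width $2b-c$ of the $j$-range. The hardest parameter combination is likely $i' = i_{\max}$ together with $j$ outside the overlap $[c-b, 2b-c]$, where no single $S$-move reaches a position already handled by Lemmas~\ref{thm:3.3}--\ref{thm:3.6}, and an explicit appeal to the Grundy data for $S'$ from~\cite{key-2} seems unavoidable in order to produce both of the required Grundy values.
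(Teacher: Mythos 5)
Your high-level strategy is the same as the paper's: exhibit one option with Grundy value $0$ and one with Grundy value $1$, so the mex is at least $2$. (The paper takes $(p+2b-c+j,S)$ as its $*0$ witness via Lemma~\ref{thm:3.3} and gestures at a second family of options as the $*1$ witness; you instead take $(p+2b+i'c+j,S)$ as the $*0$ witness via Lemma~\ref{thm:3.5} and try to use $(p+2b-c+j,S)$ as the $*1$ witness via Lemma~\ref{thm:3.4}.) But what you have written is a plan rather than a proof, and the cases you defer are not corner cases. Your $*1$ witness requires $c-b\le j\le 2b-c$, which is a nonempty range only when $3b\ge 2c$; for the paper's own running example $b=8$, $c=13$ it is empty, so \emph{every} instance of the lemma lands in the subcases you leave as ``I would need to find the Grundy-$1$ option elsewhere.'' A complete argument has to actually produce that option there, and yours does not.

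More seriously, the ``hardest parameter combination'' you flag ($i'=i_{\max}$ with $j$ small) is not merely hard: there the statement fails as printed, so no amount of appeal to~\cite{key-2} will close it. For $i'=i_{\max}$ and $j\le c-b$, every $S$-option of $(p+i_{\max}c+3b+j,S)$ has the form $(p+2b+mc+j,S)$ with $0\le m\le i_{\max}$, and each of these is $*0$ by Lemma~\ref{thm:3.5}, while every $\overline{S}$-option has value at least $2$ by Lemma~\ref{thm:3.2}. Hence no option has value $1$ and the mex is exactly $1$ (concretely, $S=\{8,21,34,47\}$ and $n=118$: the $S$-options $110,97,84,71$ are all $*0$). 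This is exactly what Lemma~\ref{thm:3.6} asserts on its range $0\le j\le c-b$, which overlaps the range $0\le j\le 2b-c$ of Lemma~\ref{thm:3.7}. Your instinct that this case is the obstruction is correct, but the resolution is not a cleverer choice of move: the ranges of Lemmas~\ref{thm:3.6} and~\ref{thm:3.7} must first be disentangled before either statement can be proved.
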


\begin{proof}
The options of $(p+i'c+3b+j,S)$ has options of the form $(p+2b-c+j,S)$.
This option evaluates to 0 by Lemma \ref{thm:3.3}. Other options of the form
$(p+2b+jc+j,S)$ evaluate to 1. Therefore, using the mex rule, all
of these games have Grundy values greater than 1. 
\end{proof}
\begin{lem}\label{thm:3.8}
If $n\geq p$ then $\mathcal{G}(n,\overline{S}) > 2i_{\max}$.
\end{lem}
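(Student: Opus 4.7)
The plan is to prove $\mathcal{G}(n,\overline{S}) > 2i_{\max}$ via the mex rule by exhibiting, for each target value $k \in \{0, 1, \ldots, 2i_{\max}\}$, an option of $(n,\overline{S})$ whose Grundy value equals $k$. Since $(n,\overline{S})$ has options $(n-s,S)$ and $(n-s,\overline{S})$ for every $s \in \overline{S}\cap[n]$, I have a large pool to draw from, and the task reduces to reachability: ensuring that the $s$ required to land at a given position is actually in $\overline{S}$.

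The principal tool is Lemma~\ref{thm:3.1}, which allows me to equate $\mathcal{G}(m,\overline{S}) = \mathcal{G}(m,\overline{S'})$ for any $m < p$. The formula for the infinite-progression complement game established in~\cite{key-2} shows that $\mathcal{G}(m,\overline{S'})$ attains each of the values $0, 1, 2, \ldots, 2i_{\max}$ on a block of consecutive $m \in [0, p)$. For each target $k$, I pick an $m$ in the corresponding block and argue that the subtraction $s = n - m$ can be arranged to lie in $\overline{S}$: since $S$ consists of only $i_{\max}+1$ integers separated by gaps of size $c$, so long as the chosen block has width more than $c$, a valid $m$ within it exists and $(m,\overline{S})$ is a legitimate option of $(n,\overline{S})$. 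In the case $n > p$ strictly, the Grundy values $0$ and $1$ are obtained more cheaply through Lemmas~\ref{thm:3.3} and~\ref{thm:3.4}, which directly furnish the options $(p+j,S)$ with $j \in [0,b-1]$ and $j \in [b,2b-1]$ respectively, provided the corresponding $s = n-p-j \in \overline{S}$—and the same density argument guarantees this in each $b$-wide block.

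The main obstacle will be transcribing the Grundy-value structure of $\overline{S'}$ from~\cite{key-2} with enough precision to confirm that each of the $2i_{\max}+1$ target values is realized on a block of consecutive $m$ long enough to evade all $i_{\max}+1$ forbidden subtractions, and to confirm the base case $n=p$ where no options with $m \ge p$ are available. Once this bookkeeping is complete, the mex-rule conclusion $\mathcal{G}(n,\overline{S}) > 2i_{\max}$ is immediate.
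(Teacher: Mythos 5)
Your overall strategy---use Lemma \ref{thm:3.1} to transfer the options to $\overline{S'}$, read off their Grundy values from \cite{key-2}, and conclude by the mex rule once every value $0,1,\dots,2i_{\max}$ is realized---is the same as the paper's. The paper simply asserts that $(n,\overline{S})$ has every $(m,\overline{S})$ with $m<p$ as an option, notes (citing \cite{key-2}) that the values of these options form an initial segment $\{0,1,\dots,M\}$, and then uses the explicit formula $\mathcal{G}(m,\overline{S'})=3+rb+i$ for $m=b+3+2br+i$, $0\le i<2b$, to bound $M>2i_{\max}$. You are right to flag reachability as the genuine issue, which the paper glosses over: for $n=p$, for example, the positions $(m,\overline{S})$ with $m\in S$ are exactly the ones that are \emph{not} options. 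In that respect your proposal is more honest about where the work lies.

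However, the way you propose to discharge reachability does not work. Your plan rests on the claim that $\mathcal{G}(m,\overline{S'})$ attains each target value $k\le 2i_{\max}$ on a block of consecutive $m$ of width more than $c$, so that the $i_{\max}+1$ forbidden subtractions cannot eliminate every witness. The formula from \cite{key-2} quoted in the paper says the opposite: on each window $m\in[b+3+2br,\;b+3+2br+2b)$ the Grundy value is $3+rb+i$, i.e.\ it increases by exactly $1$ as $m$ increases by $1$, so each value $k$ is attained not on a long run but at isolated points---at most two points of $[0,p)$, and when there are two they differ by exactly $b$. The density argument as you state it therefore fails. It can be partially repaired (two witnesses differing by $b$ cannot both be forbidden, since $0<b<c$ gives $b\not\equiv 0\bmod c$ while distinct elements of $S$ differ by multiples of $c$), but values attained only once, and the boundary case $n=p$, still need separate treatment, and your proposal explicitly defers exactly this bookkeeping to later. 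As written, the critical step rests on a false structural premise, so there is a genuine gap.
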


\begin{proof}
Since $n\geq p$, $(n,\overline{S})$ has all of $(0,\overline{S}),(0,\overline{S}),\ldots,(p-1,\overline{S})$
as options. From \cite{key-2} we know that the Grundy values of these
options contain every value from 
$0$ to $\max(\{\mathcal{G}(0,\overline{S}),\allowbreak \mathcal{G}(0, \overline{S}), \ldots, G(p-1,\overline{S})\})$.
For $n<p$ we know that if $n=b+3+2br+i$ then $\mathcal{G}(n,\overline{S})=3+rb+i$.
From this we know: $2\times\mathcal{G}(n,\overline{S})=2br+6+2i$. Subtracting
$n$ from both sides, we have $2\times\mathcal{G}(n,\overline{S})-n=-b+3+i$.
By definition $0\leq i<2b$, so $-b+3\leq2\mathcal{G}(n,\overline{S)}-n\leq b+3$.
That is, $2\mathcal{G}(n,\overline{S})\geq n-b+3$ or $\mathcal{G}(n,\overline{S})\geq\frac{{n-b+3}}{2}$.
Since the max $n$ possible is $b+i_{\max}c$, we substitute that in
here and see that $\mathcal{G}(n,\overline{S})>\frac{i_{\max}c+3}{2}$. Since we assumed
$b\geq5\text{ and }c>b$, $c>5$. This implies $\frac{i_{\max}c+3}{2}>2i_{\max}$.
Therefore, $\mathcal{G}(n,\overline{S}) > 2i_{\max}$.
\end{proof}
\begin{thm}\label{thm:3.9}
$\mathcal{G}(n,S)$ is eventually periodic with period $p$. 
\end{thm}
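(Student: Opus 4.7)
The plan is to prove $\mathcal{G}(n, S) = \mathcal{G}(n - p, S)$ for all sufficiently large $n$ by strong induction, using Lemmas \ref{thm:3.3}--\ref{thm:3.7} as base cases and Lemma \ref{thm:3.8} to discard $\overline{S}$-options throughout the induction.

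The first step is a uniform upper bound on $\mathcal{G}(n, S)$. By Lemma \ref{thm:3.8}, once $n$ is large enough that every option $(n - s, \overline{S})$ satisfies $n - s \geq p$, every $\overline{S}$-Grundy value exceeds $2 i_{\max}$. Meanwhile the $(n - s, S)$-options number at most $|S| = i_{\max} + 1$, so they fill at most $i_{\max} + 1$ of the $2 i_{\max} + 1$ slots in $\{0, 1, \ldots, 2 i_{\max}\}$, leaving at least one empty slot. Hence $\mathcal{G}(n, S) \leq 2 i_{\max}$ and the $\overline{S}$-options contribute nothing to the mex:
\[
\mathcal{G}(n, S) \;=\; \mathrm{mex}\bigl\{\mathcal{G}(n - s, S) : s \in S\bigr\}.
\]

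For the inductive step, fix $n$ large enough that $n - s \geq p$ for every $s \in S$. If $n - s \geq 2p$ the inductive hypothesis gives $\mathcal{G}(n - s, S) = \mathcal{G}(n - s - p, S)$ directly; otherwise $n - s$ lies in the base period $[p, 2p)$ and this equality is part of the base case. Substituting these into the displayed mex-formula, applied to both $(n, S)$ and $(n - p, S)$, yields $\mathcal{G}(n, S) = \mathcal{G}(n - p, S)$.

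The main obstacle is the base case, which requires verifying the periodicity across one full period. This is precisely the role of Lemmas \ref{thm:3.3}--\ref{thm:3.7}: they pin down $\mathcal{G}(p + j, S)$ for $j$ running over residue classes of $[0, p)$ modulo $c$, and their $i' = i_{\max}$ cases already push into $[2p, 2p + (c - b)]$, giving Grundy values directly in the second period. A case analysis on the residue of $n - p$ modulo $c$ — organized around the four strips $[0, b)$, $[b, 2b)$, $[2b + i'c, 2b + i'c + (c - b)]$, and $[3b + i'c, 3b + i'c + (c - b)]$ from those lemmas — shows that each Grundy value computed in $[p, 2p)$ is reproduced in $[2p, 3p)$. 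The delicate combinatorial bookkeeping required to confirm that these residue classes tile the period and that the resulting values agree across the $p$-shift is the step that drives the entire proof.
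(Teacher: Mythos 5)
Your skeleton matches the paper's: Lemma \ref{thm:3.8} plus a count of the $S$-options kills the $\overline{S}$-options in the mex, Lemmas \ref{thm:3.3}--\ref{thm:3.7} anchor the argument, and an induction propagates periodicity forward one block at a time. The reformulation as a strong induction on the single statement $\mathcal{G}(n,S)=\mathcal{G}(n-p,S)$ is a slightly cleaner packaging than the paper's propagation of five separate conditions, but it does not reduce the work --- it only relocates all of it into your base case, and that base case is where your proposal has a genuine gap. To start the induction you need $\mathcal{G}(n,S)=\mathcal{G}(n-p,S)$ on a full window, i.e.\ you must know the Grundy values on \emph{two} consecutive blocks, $[p,2p)$ and $[2p,3p)$. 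Lemmas \ref{thm:3.3}--\ref{thm:3.7} give you (most of) the first block and, via the $i'=i_{\max}$ cases, only fragments of the second of total length about $2(c-b+1)$, not the whole period. The remaining verification --- which is exactly the paper's inductive step, re-running each lemma's argument with $lp$ added --- is what you defer to ``delicate combinatorial bookkeeping.'' That bookkeeping is the proof; asserting that it ``shows that each Grundy value computed in $[p,2p)$ is reproduced in $[2p,3p)$'' is assuming the conclusion.

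There is also a specific obstruction your residue-class case analysis would hit and does not mention: on the strip $0\leq j\leq 2b-c$ of Lemma \ref{thm:3.7}, the lemmas only give the bound $\mathcal{G}(p+i'c+3b+j,S)>1$, not a value, so you cannot conclude equality across the $p$-shift there by matching computed values. The paper needs a separate observation to handle this: positions in that strip have no options in the corresponding strip of the previous block, so all of their options lie in strips carrying constant values $0$ and $1$, forcing the mex to agree from block to block. Without this (or an equivalent) argument, the periodicity claim on that strip is unproven, and your induction cannot get off the ground.
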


\begin{proof}
First, we notice that for $n>2p$, there are at most $2i_{\max}$ options
for the game $(n,S)$, therefore the maximum value of $(n,S)\leq2i_{\max}$.
All of these options have more than $p$ stones, therefore, the options
with set $\overline{S}$ are all greater than $2i_{\max}$ and therefore
do not affect the value of $(n,S)$. We will prove that given the
following statements for some $l$, they will also hold for $l+1$:
\begin{itemize}
\item if $0\leq j\leq b-1$, $\mathcal{G}(lp+j,S)=0$;
\item if $b\leq j\leq2b-1$, $\mathcal{G}(lp+j,S)=1$;
\item if $0\leq i'\leq i_{\max}$ and $0\leq j\leq c-b$, $\mathcal{G}(lp+2b+i'c+j,S)=0$;
\item if $0\leq i'\leq i_{\max}$ and $0\leq j\leq c-b$, then $\mathcal{G}(lp+i'c+3b+j,S)=1$;
\item i $0\leq i'\leq i_{\max}$ and $0\leq j\leq2b-c$, then $\mathcal{G}(lp+i'c+3b+j,S)>1$.
\end{itemize}
We first note that all of these conditions are met when $l=1$ because of Lemmas \ref{thm:3.1}--\ref{thm:3.8} which serve as the base case. Proving all of the above conditions is sufficient prove that
$\mathcal{G}(n,S)$ is periodic with period $p$ since the only group
of numbers which is not given a constant value ($0\leq i'\leq i_{\max}$
and $0\leq j\leq2b-c$, $\mathcal{G}(lp+i'c+3b+j,S)$) does not have
options of the form $\mathcal{G}((l-1)p+i'c+3b+j,S)$). All its options
from the previous block are given constant values by the conditions listed above, therefore they must be the same in each block. Now we proceed to prove that if the above conditions hold for some $l$, they hold for $l+1$. 

First we consider games of the form $((l+1)p+j,S)$ where $0\leq j\leq b-1$. The options of $((l+1)p+j,S)$ are of the form $(lp+b+kc+j,S)$ and $(lp+b+kc+j,\overline{S})$
where $0<k<i_{\max}$. These options are all greater than 0 by our inductive hypothesis. Therefore, $\mathcal{G}((l+1)p+j,S)=0$
by the mex rule.

Next we consider games of the form $\mathcal{G}((l+1)p+j,S)=1$ where $b\leq j\leq2b-1$.
First, we consider games where $b\leq j\leq c$. The game $((l+1)p+2b+i_{\max}c+j,S)$
has options of the form $(lp+b+kc+j,S)$ and $(lp+b+kc+j,\overline{S})$.
Since $b\leq j\leq c$, these can be rewritten as $(lp+2b+kc+x,S)$
and $(lp+2b+kc+x,\overline{S})$ for some $0\leq x\leq c-b$. From
the conditions from the inductive hypothesis listed before, and our previous observation, these
options all evaluate to 0. If $c\leq j\leq2b-1$, we may rewrite $(lp+b+kc+j,S)$
and $(lp+b+kc+j,\overline{S})$ as $(lp+b+(k+1)c+x,S)$ and $(lp+b+(k+1)c+x,\overline{S})$
for $0\leq x\leq2b-c-1$. Therefore, $\mathcal{G}((l-1)p+b+(k+1)c+x,S)>1$ because of the conditions from the inductive hypothesis. Therefore, $((l+1)p+j,S)=1$.

Next we consider the games of form $((l+1)p+2b+i'c+j,S)$ where $0\leq j\leq c-b$.
These games have options of the form $lp+3b+(i_{\max}+i'-k)c+j,S)$
and $lp+3b+(i_{\max}+i'-k)c+j,\overline{S})$ where $0\leq k\leq i_{\max}$.
The options of the latter kind can be ignored since they are all greater
than or equal to $2 i_{\max}$. We consider the options described before. If
$lp+3b+(i_{\max}+i'-k)c+j\leq(l+1)p$ then that option evaluates to
$1$ from the results in \cite{key-2}. If $lp+3b+(i_{\max}+i'-k)c+j>(l+1)p$,
then we consider the option's options. The option has an option of
the form $(lp+2b+i'c+j,S)=*0$ by our assumptions. Therefore, the
option itself cannot be $*0$. Using the mex rule, we can see now
that $\mathcal{G}((l+1)p+2b+i'c+j,S)=0$.

The options of $((l+1)p+i'c+3b+j,S)$ are of the form $(lp+4b+kc+j,S)$.
If $lp+4b+kc+j>(l+1)p$ then we know from Lemma \ref{thm:3.5} that $(4b+kc+j+lp,S)=*0$.
Else, we can rewrite $(4b+kc+j+lp,S)$ as $(2b+(k+1)c+(2b-c)+j+lp,S)$.
From the assumptions, we know that these options are either 0 or greater
than 1 (they all have options that are $*1$). Therefore, n $\mathcal{G}((l+1)p+i'c+3b+j,S)=1$.

The options of $((l+1)p+i'c+3b+j,S)$ has options of the form $((l+1)p+2b-c+j,S)$.
This option evaluates to 0 by our assumptions. Other options of the
form $((l+1)p+2b+jc+j,S)$ evaluate to 1. Therefore, using the mex
rule, all of these games have Grundy values greater than 1. 

All of the conditions specified in the inductive assumption are met. Therefore, by induction, the $\mathcal{G}(n,S)$ is periodic with period $p$.
\end{proof}

\bibliographystyle{alpha}
\bibliography{muller}

\end{document}